\newtheorem{thm}{Theorem}[section]
\newtheorem{prop}[thm]{Proposition}
\newtheorem{cor}[thm]{Corollary}
\theoremstyle{definition}
\theoremstyle{remark}
\theoremstyle{example}
\numberwithin{equation}{section}
\begin{document}

\title{A curvature flow in the plane with a nonlocal term }
\author{Luis A. Caffarelli}\address{Department of Mathematics, the University of Texas at Austin}\email{caffarel@math.utexas.edu}
\author{Hui Yu}
\address{Department of Mathematics, Columbia University in the City of New York}
\email{huiyu@math.columbia.edu}

\begin{abstract}
We study the geometric flow of a planar curve driven by its curvature and the normal derivative of its capacity potential. Under a convexity condition that is natural to our problem, we establish long term existence and large time asymptotics of this flow. 
\end{abstract}

 \maketitle

\tableofcontents

\section{Introduction}
In this paper we study the motion of domains in the plane, where the inward normal velocity of the boundary is given by $$V=H-u_\nu^2 \text{ on $\partial\Omega_t$}.$$ Here $\Omega_t$ is the domain at time $t$, which we assume to contain $B_1$ as a compact subset. $H$ is the curvature of its boundary, and $u_\nu$ is the normal derivative of the solution to $$\begin{cases}\Delta u=0 &\text{ in $\Omega_t\backslash\overline{B_1}$,}\\u=1 &\text{ in $B_1$,}\\ u=0 &\text{ outside $\Omega_t$.}\end{cases}$$ We call this function $u$ the capacity potential of $\Omega_t$, and often denote it by $u_{\Omega_t}$.

This dependence on the bulk introduces nonlocality to the problem as two points on $\partial\Omega_t$, far way from each other, interact through the potential. This leads to the main mathematical challenge in the study of this flow as compared to more classical geometric flows. For the motivation behind this problem and its relation to more classical geometric flows,  see \cite{Yu}. There one of the authors proved short time well-posedness under more general assumptions and in arbitrary spatial dimensions, and gave examples to show the inevitability of singularities in general, after which two possible weak formulations were also proposed. 

One of the problems left open in \cite{Yu} is to find criteria on the initial configuration $\Omega_0$ under which singularities can be avoided, and the flow remains smooth globally in time. In this paper, we give one such condition for curves in the plane, namely, \begin{equation}H> u_\nu^2 \text{ on $\partial \Omega_0$}.\end{equation}That is, on the initial domain, the curvature of the boundary strictly dominates the normal derivative of the capacity potential. 

To see why this is a natural condition, it is illustrative to look at a more classical geometric flow, the mean curvature flow, where the equation for motion is $$V=H.$$ The condition here for global existence of smooth flow is $$H\ge 0 \text{ on $\partial\Omega_0$},$$namely, the initial curve is convex. Just like our condition (1.1), this condition guarantees the curve is moving inward at least initially. Inspired by a similar result by Huisken \cite{H}, Gage-Hamilton \cite{GH} showed that this convexity is preserved by the mean curvature flow, and with this convexity they were able to give uniform curvature bound and show the existence of a global smooth flow. Both Huisken \cite{H} and Gage-Hamilton \cite{GH} rely crucially on the fact that the mean curvature flow gives rise to explicit evolution equations for geometric quantities like the length of the curve, the area and  inscribe radius of the enclosed region, curvature of the curve and even ratio between curvature at different points on the curve.   

Due to the nonlocality introduced by the bulk term $u_\nu$, except for very special configurations, deriving explicit evolution for these quantities is very difficult if not impossible.  Nevertheless, ideas from geometric flows and free boundary problems, and the special structure of planar curves allow us to establish the following main result of this work:
\begin{thm}
If $\Omega_0\supset\supset B_1$ is a bounded $C^{2,\alpha}$ domain satisfying the `convexity' condition (1.1), then the flow exists for all positive time.  Moreover,  $\Omega_t$ converges to $B_{R_{opt}}$ as $t\to\infty.$
\end{thm}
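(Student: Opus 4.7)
The plan is to run a continuation argument starting from the short-time existence of \cite{Yu}. Let $[0, T^*)$ be the maximal interval of smooth existence on which the condition (1.1) persists. The main work is to show $T^* = +\infty$ via uniform estimates, and then to identify the large-time limit.

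The cornerstone is the preservation of the inequality $Q := H - u_\nu^2 > 0$ along the flow. Under inward normal velocity $V = Q$, a direct calculation gives a parabolic equation of the form $\partial_t H = Q_{ss} + H^2 Q$ (in arclength), so the curvature part of $Q$ evolves with the correct sign at a spatial minimum. For the nonlocal term, I would differentiate the Dirichlet problem in $t$: by Hadamard's formula $\dot u$ is harmonic in $\Omega_t\setminus\overline B_1$, vanishes on $\partial B_1$, and equals $u_\nu V = u_\nu Q$ on $\partial\Omega_t$; consequently $\partial_t(u_\nu^2)$ is expressed as a linear nonlocal operator acting on $Q$ along $\partial\Omega_t$. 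At a point where $Q$ attains its positive minimum, the local terms reproduce the good sign familiar from the mean curvature flow, and the task is to verify that the nonlocal correction has a compatible sign there. I expect this Hopf-type analysis of the capacity-variation operator to be the single hardest step; the nonlocality forbids a purely pointwise maximum principle, so one must exploit planarity and the global positivity of $Q$ on the whole curve.

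Granted preservation of $Q > 0$, the domains are monotonically shrinking, so $\operatorname{diam}(\Omega_t)$ is controlled by $\Omega_0$. A complementary lower bound on the distance from $\partial\Omega_t$ to $\partial B_1$ comes from the condition itself: if some point on $\partial\Omega_t$ approached $\partial B_1$, the capacity potential would concentrate and $u_\nu$ would blow up there, while Gauss--Bonnet pins $\int_{\partial\Omega_t} H\,ds = 2\pi$, preventing $H$ from dominating $u_\nu^2$. Hence $\partial\Omega_t$ remains in a fixed compact subset of $\Omega_0\setminus\overline B_1$, yielding uniform bounds on $u_\nu$ by elliptic regularity. An upper bound on $H$ follows from the evolution of $H$ combined with the $u_\nu$ bounds, while $H > u_\nu^2 \ge 0$ is immediate. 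Standard parabolic bootstrapping then gives $C^{2,\alpha}$ estimates up to $T^*$, so the flow extends past $T^*$, and maximality forces $T^* = +\infty$.

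For the asymptotics, the area identity $\tfrac{d}{dt}|\Omega_t| = -\int_{\partial\Omega_t} Q\,ds$ together with the trivial lower bound $|\Omega_t|\ge \pi$ yields $\int_0^\infty\!\!\int_{\partial\Omega_t} Q\,ds\,dt < \infty$. Combined with the uniform $C^{2,\alpha}$ bounds, this forces $Q\to 0$ uniformly along the flow. Any smooth subsequential limit $\Omega_\infty$ therefore satisfies the overdetermined equation $H = u_\nu^2$ on $\partial\Omega_\infty$ with $u_{\Omega_\infty}$ the corresponding capacity potential. A Serrin-style moving-plane argument, available here because the inner hole $B_1$ is itself rotationally symmetric, forces $\Omega_\infty$ to be a disk concentric with $B_1$, and the overdetermined equation pins the radius to the unique $R_{opt} > 1$ satisfying $R_{opt}(\log R_{opt})^2 = 1$. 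Uniqueness of the limit upgrades subsequential convergence to full convergence as $t\to\infty$.
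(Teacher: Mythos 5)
Your plan diverges from the paper at nearly every step, and the most serious divergence is also where your argument has a genuine gap. For the preservation of $Q := H - u_\nu^2 > 0$, you propose to run a pointwise maximum-principle argument at a spatial minimum of $Q$, differentiating the Dirichlet problem in time to track the nonlocal contribution, and you candidly note that verifying the sign of the nonlocal correction is ``the single hardest step.'' The paper agrees that this step does not go through in any straightforward way --- it explicitly remarks that no explicit evolution equation for the curvature or $u_\nu^2$ is available, so the standard curvature-flow maximum-principle template fails --- and instead \emph{avoids the pointwise computation entirely}. Its device is a time-shift: setting $O_t := \Omega_{t+\tau}$, condition (1.1) gives $O_0 = \Omega_\tau \subset \Omega_0$ for small $\tau$, so the comparison principle (Theorem 2.1) yields $\Omega_{t+\tau} \subset \Omega_t$ for all $t$; this shows $u(\cdot,t)$ is decreasing in $t$, hence the domain is strictly shrinking, which is equivalent to $Q>0$. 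Your Hopf-type analysis of the capacity-variation operator is left as an open subproblem in your write-up, so as it stands your proof of the key invariance is missing.

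The downstream estimates also take a different and, in your version, an underspecified route. You argue that $\partial\Omega_t$ stays away from $\partial B_1$ via Gauss--Bonnet versus blow-up of $u_\nu$, but this heuristic does not rule out a high-curvature spike confined to a short arc, and it does not give the quantitative separation the bootstrap needs. The paper instead proves the sharper fact that any domain satisfying (1.1) must compactly contain $B_{R_{opt}}$ (a scaling argument, Proposition 4.1), then compares with radial flows $B_{R(t)}$ to get a time-uniform lower bound $\mathrm{dist}(\partial\Omega_t, B_{R_{opt}}) \ge \delta$ (Proposition 4.2); this, together with convexity, gives a uniform Lipschitz bound on the polar parametrization $\rho$ (Proposition 4.4) and the $|u_\nu| \le 1/(R_{opt}-1)$ bound (Proposition 4.3), which are exactly the inputs the De Giorgi--Nash--Moser/Schauder bootstrap in Section 5 requires. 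Your ``standard parabolic bootstrapping'' glosses over the fact that the equation's coefficients only become controlled after these geometric bounds are in place. Finally, your asymptotics via an area-dissipation identity plus a Serrin moving-plane classification of the overdetermined limit is plausible but heavier than necessary: the paper simply traps $\Omega_t$ between two radial flows, both converging to $B_{R_{opt}}$ by the ODE analysis of Proposition 2.2, which gives the limit immediately with no rigidity theorem needed.
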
 

That is, condition (1.1) is sufficient to rule out singularities, and the smooth flow has nice large time asymptotics.  Here $B_{R_{opt}}$ a ball centered at the origin with radius $R_{opt}$, whose value is given in the next section. It is the stationary solution to our flow and corresponds to the optimal configuration to an elliptic problem in Athanasopoulos-Caffarelli-Kenig-Salsa \cite{ACKS}. 

We believe ideas from this paper would open the door to many other interesting open problems related to this flow and other geometric flows involving nonlocal phenomena. 

This paper is organised as follows: in the second section we recall some geometric properties established in \cite{Yu}, and give some results on radially symmetric flows; in the third section we prove that condition (1.1) is preserved under the flow based on a scaling argument and the comparison principle; this is used in section four to show that our flow stays at a positive distance from $B_{R_{opt}}$ for any finite time, which leads to a uniform Lipschitz estimate of the curve based on special geometry of the plane; in the last section we use this regularity to get a smooth flow via parabolic regularisation. The large time convergence is also given in this last section.

\section{Preparatory results}
Throughout this paper we assume that the initial configuration $\Omega_0$ is a bounded $C^{2,\alpha}$ domain in $\mathbb{R}^2$ containing $B_1$ as a compact subset. Then result in \cite{Yu} guarantees the existence of a smooth flow of sets $\{\Omega_t\}_{0\le t<T}$ for some possibly short time $T$. For the definition of a smooth flow and the proof of this short-time well-posedness result, the reader should consult \cite{Yu}.

One of the remarkable geometric properties established in \cite{Yu} is the comparison principle for our flow:
\begin{thm}
Let $\Omega_0$ and $\Omega'_0$ be two initial domains with respective smooth flows $\{\Omega_t\}$ and $\{\Omega'_t\}$ up to time $T$. If $\Omega_0\subset\Omega'_0$ then $\Omega_t\subset\Omega'_t$ for all $0\le t\le T$.
\end{thm}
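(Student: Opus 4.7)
The plan is to argue by contradiction at the first time inclusion fails. Set $t_0 := \sup\{s \le T : \Omega_t \subset \Omega'_t \text{ for all } t \le s\}$ and suppose $t_0 < T$. Continuity of the smooth flows gives $\Omega_{t_0} \subseteq \overline{\Omega'_{t_0}}$, and maximality of $t_0$ produces a contact point $p \in \partial\Omega_{t_0} \cap \partial\Omega'_{t_0}$. The case $\Omega_{t_0} = \Omega'_{t_0}$ is ruled out by forward uniqueness of the smooth flow (it would force the two flows to agree past $t_0$, contradicting the definition of $t_0$), so I may assume $\Omega_{t_0} \subsetneq \Omega'_{t_0}$ with internal tangency at $p$; in particular the two boundaries share a common tangent and outward unit normal $\nu$ at $p$.

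Two comparisons at $p$ will drive the contradiction. The first is purely geometric: representing both boundaries as graphs over the common tangent line at $p$, the inclusion gives at second order $H(\partial\Omega_{t_0}, p) \ge H(\partial\Omega'_{t_0}, p)$. The second, and the heart of the argument, is the matching comparison for the nonlocal term. On the common domain $\Omega_{t_0} \setminus \overline{B_1}$ I consider the harmonic function $w := u_{\Omega'_{t_0}} - u_{\Omega_{t_0}}$; its boundary values are $0$ on $\partial B_1$ and $u_{\Omega'_{t_0}} \ge 0$ on $\partial\Omega_{t_0}$, with $w(p) = 0$. By strict inclusion and the strong maximum principle $w > 0$ in the interior, so Hopf's lemma at the boundary minimum $p$ yields $\partial_\nu u_{\Omega'_{t_0}}(p) < \partial_\nu u_{\Omega_{t_0}}(p) < 0$; squaring reverses this to the strict inequality $u_\nu^2(\Omega'_{t_0}, p) > u_\nu^2(\Omega_{t_0}, p)$.

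Inserting both comparisons into $V = H - u_\nu^2$ gives the strict inequality $V(\partial\Omega_{t_0}, p) > V(\partial\Omega'_{t_0}, p)$: the inner front recedes strictly faster than the outer one at the contact point. In the local graph parameterization near $p$ this means the nonnegative normal gap between the two fronts has strictly positive time derivative at $t_0$, even though it vanishes at $t_0$ and is nonnegative on $[0,t_0]$. This is the required contradiction, so $t_0 = T$.

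The main obstacle is precisely the nonlocal term. Since the two capacity potentials live on \emph{different} domains, the inclusion $\Omega_{t_0} \subset \Omega'_{t_0}$ does not immediately produce any pointwise comparison of their gradients at $p$, and naive intuition (based on the smaller domain forcing a shorter decay distance from $1$ to $0$) even suggests that $u_\nu^2$ should compete against $H$ in $V = H - u_\nu^2$ rather than cooperate. The step that rescues the touching-point argument is passing to the difference $w$ on the inner common domain: on that domain $w$ is a nonnegative harmonic function vanishing at $p$, so Hopf delivers a strict inequality aligned exactly with the curvature comparison, and the classical comparison scheme goes through.
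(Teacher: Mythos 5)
The paper does not actually prove Theorem~2.1; it is quoted verbatim from~\cite{Yu}, so there is no in-text argument to compare against. Taken on its own terms, your proof is a correct and essentially standard touching-point argument, and it is very likely the same scheme used in~\cite{Yu}. Three checks are worth spelling out. First, in the strong maximum principle step you need $w = u_{\Omega'_{t_0}} - u_{\Omega_{t_0}} \not\equiv 0$ on $\Omega_{t_0}\setminus\overline{B_1}$; this holds because $\Omega_{t_0}\subsetneq\Omega'_{t_0}$ with both connected forces $\partial\Omega_{t_0}\setminus\partial\Omega'_{t_0}\neq\emptyset$, and on that set $w = u_{\Omega'_{t_0}} > 0$. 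Second, Hopf's lemma requires an interior ball at $p$, which the $C^{2,\alpha}$ regularity of the smooth flow supplies. Third, the final contradiction is cleanest as a one-sided derivative statement: the normal gap $g(t)$ at $p$ satisfies $g(t)\ge 0$ for $t<t_0$ and $g(t_0)=0$, so its left derivative is $\le 0$; the velocity comparison at $p$ computes that same derivative as $V_{\partial\Omega_{t_0}}(p)-V_{\partial\Omega'_{t_0}}(p)>0$, a contradiction. The genuinely nontrivial point, which you identify and resolve correctly, is that the curvature comparison at the contact point gives only $H_{\partial\Omega_{t_0}}(p)\ge H_{\partial\Omega'_{t_0}}(p)$ with no strictness, and the two potentials live on different domains; restricting the harmonic difference to the smaller domain and applying Hopf there yields the strict inequality $(u_{\Omega'_{t_0}})_\nu^2(p) > (u_{\Omega_{t_0}})_\nu^2(p)$, which is aligned with (not opposed to) the curvature comparison once the sign of $u_\nu$ is tracked. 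The argument is sound.
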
 

It was shown in \cite{Yu} that at least for short-time we can parametrise our sets by a scalar function $\Lambda$ over the initial configuration by $$\partial\Omega_t=\{x+\Lambda(x,t)\nu_{\partial\Omega_0}(x):x\in\partial\Omega_0, 0\le t<T\}.$$ This $\Lambda$ then solves an equation of parabolic type. Under condition (1.1), there is a bijection of class $C^{2,\alpha}$ between $\Omega_0$ and the unit sphere $\mathbb{S}^1$, and the parametrisation for the domains passes to a function over $\mathbb{S}^1$, together with the equation:

\begin{prop}
The parametrisation $\Lambda$, as a function over $\mathbb{S}^1$, solves $$\partial_t\Lambda(\theta,t)=a(\theta,\Lambda,\partial_\theta\Lambda)\partial_{\theta\theta}\Lambda+b(\theta,\Lambda,\partial_\theta\Lambda)+c(\theta,\Lambda,\partial_\theta\Lambda)|\nabla u|^2 \text{ on $\mathbb{S}^1$},$$ where $a,b$ and $c$ are functions depending only on $\Omega_0$ with $C^{2,\alpha}$-regularity in their components. Furthermore, $0<c\le a<C<\infty$ for constants $c$ and $C$ only depending on $\Omega_0$. 
\end{prop}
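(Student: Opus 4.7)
The plan is to derive the equation by direct geometric computation over $\partial\Omega_0$, push it to $\mathbb{S}^1$ via the Gauss map diffeomorphism that condition (1.1) guarantees, and then read off the structural properties from the resulting formulas.

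I would first write $X(x,t) = x + \Lambda(x,t)\nu_0(x)$ with arclength $s$ on $\partial\Omega_0$, and use the Frenet identities for the curvature $\kappa_0$, tangent $T_0$ and outer normal $\nu_0$ of the initial curve. A short computation yields
\[
\partial_s X = (1+\kappa_0\Lambda)T_0 + \Lambda_s\nu_0, \qquad L := |\partial_s X| = \sqrt{(1+\kappa_0\Lambda)^2 + \Lambda_s^2},
\]
from which the outer normal $\nu_t$ to $\partial\Omega_t$, the relation $V = -\Lambda_t(1+\kappa_0\Lambda)/L$ for the inward normal velocity, and the formula for $H$ (affine in $\Lambda_{ss}$ with coefficient $-(1+\kappa_0\Lambda)/L^3$) all follow. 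Substituting into $V = H - u_\nu^2$ and solving for $\Lambda_t$ gives
\[
\Lambda_t = \frac{1}{L^2}\Lambda_{ss} + \tilde b(s,\Lambda,\Lambda_s) + \frac{L}{1+\kappa_0\Lambda}\,|\nabla u|^2,
\]
with coefficients of class $C^{2,\alpha}$ on $\{1+\kappa_0\Lambda > 0\}$.

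Second, I would use (1.1) to pass to $\mathbb{S}^1$. Since $\kappa_0 = H > u_\nu^2 \ge 0$ on $\partial\Omega_0$, the initial curve is strictly convex of class $C^{2,\alpha}$, so its Gauss map $\Phi\colon\partial\Omega_0\to\mathbb{S}^1$, $x\mapsto\nu_0(x)$, is a $C^{2,\alpha}$ diffeomorphism with $d\theta/ds = \kappa_0$. Setting $\theta = \Phi(x)$ and applying the chain rule, $s$-derivatives of $\Lambda$ convert to $\theta$-derivatives with $C^{2,\alpha}$ multipliers depending only on $\Omega_0$; in particular $\Lambda_s = \kappa_0\Lambda_\theta$ and $\Lambda_{ss}$ equals $\kappa_0^2\Lambda_{\theta\theta}$ up to first-order terms in $\Lambda_\theta$. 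Because $H$ is affine in $\Lambda_{ss}$, no second-derivative dependence enters the coefficients, and the equation takes the stated form with $a, b, c$ of class $C^{2,\alpha}$ in $(\theta,\Lambda,\partial_\theta\Lambda)$ and determined by $\Omega_0$.

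Finally, the uniform parabolicity $0 < c \le a < C$ follows from the explicit formula $a = \kappa_0^2/L^2$, where now $L^2 = (1+\kappa_0\Lambda)^2 + \kappa_0^2\Lambda_\theta^2$ after the change of variable. By (1.1) together with compactness of $\partial\Omega_0$, $\kappa_0$ is bounded above and bounded away from zero, while $L$ stays between two positive constants on any a priori bounded set of $\Lambda$ and $\partial_\theta\Lambda$. The principal obstacle is the bookkeeping of the change of variables, since one must verify that the top-order coefficient indeed has the claimed regularity and independence of $\Lambda_{\theta\theta}$; this is a matter of patience with the Frenet formulas rather than a conceptual difficulty.
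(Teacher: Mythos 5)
The paper gives no in-text proof of this proposition: the normal-graph parametrisation and its PDE are imported from [Yu], and the passage to $\mathbb{S}^1$ is asserted in one sentence. So there is no authorial argument to compare against; what follows assesses your computation on its own merits.

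Your geometric derivation in $s$-coordinates (Frenet frame, $\partial_s X = (1+\kappa_0\Lambda)T_0 + \Lambda_s\nu_0$, the resulting formulae for $L$, $\nu_t$, $V$, and the affine dependence of $H$ on $\Lambda_{ss}$) is the natural route and is essentially correct. The one place where I would push back concretely is your choice of bijection $\partial\Omega_0 \to \mathbb{S}^1$. You use the Gauss map $x \mapsto \nu_0(x)$. For a $C^{2,\alpha}$ boundary, $\nu_0$ is only $C^{1,\alpha}$ (it is one derivative of the arclength parametrisation), so the Gauss map is a $C^{1,\alpha}$ diffeomorphism with derivative $\kappa_0 \in C^{0,\alpha}$ -- not the $C^{2,\alpha}$ bijection the proposition announces. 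After your change of variables the coefficients contain $\kappa_0\circ\Phi^{-1}$ explicitly (for instance $a = \kappa_0^2/L^2$), which is at best $C^{0,\alpha}$ in $\theta$, falling short of the asserted regularity of $a,b,c$. The intended identification is more likely the radial map $\theta \mapsto \rho_0(\theta)\theta$, which is $C^{2,\alpha}$ when $\partial\Omega_0$ is, and is consistent with the radial function $\rho$ used in Section 5; the Gauss map picks up an extra degenerate derivative that the paper's statement does not allow for.

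Two smaller caveats on the parabolicity claim $0<c\le a<C$. First, as you note, $a = \kappa_0^2/L^2$ is only bounded below once $\Lambda$ and $\partial_\theta\Lambda$ lie in a fixed bounded set, so the constants cannot literally depend on $\Omega_0$ alone; you acknowledge this, and the paper is equally informal. Second, and you do not say this, the lower bound on $L$ also requires $1+\kappa_0\Lambda$ to stay bounded away from zero, i.e.\ the evolving boundary must avoid the focal set of $\partial\Omega_0$; on an a priori bounded set of $(\Lambda,\partial_\theta\Lambda)$ one could still have $1+\kappa_0\Lambda\to 0$ with $\Lambda_\theta\to 0$, which kills ellipticity. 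This constraint is implicit in the short-time existence framework and should be stated if the derivation is to be self-contained.
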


Flows starting from balls are useful as barriers to our problem. We collect several facts about them:

\begin{prop}
Let $R_{opt}$ be the unique solution to $(\log R)^2=1/R,$ then $B_{R_{opt}}$ is the unique stationary solution to our flow. 

For general $R_0>1$, the flow with initial domain $B_{R_0}$ remains smooth for all positive time and is of the form $\{B_{R(t)}\}_{t\ge 0}$.

If $R_0> R_{opt}$ then $R(t)$ is decreasing with respect to $t$, $R(t)>R_0$ for all $t$, and $\lim_{t\to\infty}R(t)=R_{opt}.$ Similar results hold for the case when $1<R_0<R_{opt}$.\end{prop}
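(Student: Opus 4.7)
My first move is to reduce the problem to a scalar ODE by exploiting rotational symmetry. On the annulus $B_R\setminus\overline{B_1}$ with $R>1$, the capacity potential is the explicit radial harmonic function $u(r)=\log(R/r)/\log R$, so on $\partial B_R$ one computes $u_\nu^2 = 1/(R^2(\log R)^2)$ and $H = 1/R$. Writing the inward-normal velocity as $\dot R = -V = -H+u_\nu^2$, a ball-flow $\Omega_t=B_{R(t)}$ must satisfy
\begin{equation*}
\dot R(t) \;=\; -\frac{1}{R}+\frac{1}{R^2(\log R)^2} \;=\; \frac{1-R(\log R)^2}{R^2(\log R)^2}.
\end{equation*}
That the flow from $B_{R_0}$ actually stays a concentric ball follows from uniqueness of the smooth flow proved in \cite{Yu}: rotations about the origin preserve the equation and the initial data $B_{R_0}$, so by uniqueness each $\Omega_t$ is rotation-invariant, hence a centered ball.

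The stationary analysis now reduces to studying the sign of $1-R(\log R)^2$. Setting $f(R):=R(\log R)^2$ one has $f(1)=0$, $f(R)\to\infty$ as $R\to\infty$, and $f'(R)=\log R\,(\log R+2)>0$ on $(1,\infty)$, so $f:(1,\infty)\to(0,\infty)$ is a strictly increasing bijection. Hence $f(R)=1$ has a unique root $R_{opt}\in(1,\infty)$, identifying $B_{R_{opt}}$ as the unique stationary ball. The sign of $\dot R$ flips exactly there: $\dot R<0$ for $R>R_{opt}$ and $\dot R>0$ for $1<R<R_{opt}$.

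For global existence and asymptotics, suppose $R_0>R_{opt}$ (the other case is symmetric with the roles of the two barriers exchanged). The comparison principle (Theorem~2.1) applied to $B_{R_{opt}}\subset B_{R_0}$ gives $R(t)\ge R_{opt}$ on the maximal existence interval, while $\dot R<0$ gives $R(t)\le R_0$; so $R$ is trapped in the compact set $[R_{opt},R_0]\subset(1,\infty)$ where the ODE is smooth. Standard ODE theory then extends $R$ globally in $t$, and the resulting $\{B_{R(t)}\}$ is a smooth flow for all $t\ge 0$. Monotonicity and boundedness yield $R(t)\to R_\infty\in[R_{opt},R_0]$, and the ODE forces $f(R_\infty)=1$, so $R_\infty=R_{opt}$. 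The only real obstacle is keeping $R$ bounded away from $1$ and $\infty$; the comparison principle disposes of both.
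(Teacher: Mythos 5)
Your proof is correct and follows essentially the same route as the paper: reduce to the scalar ODE
\begin{equation*}
\dot R = -\frac{1}{R}+\frac{1}{R^2(\log R)^2}
\end{equation*}
via the explicit radial potential, analyze the sign of the right-hand side to find the unique equilibrium, and use sign-definiteness plus trapping to get global existence and convergence. Two points where you differ are worth noting. First, you explicitly justify that the flow starting from $B_{R_0}$ stays a centered ball, by invoking uniqueness of the smooth flow together with rotational invariance of the equation and the initial data; the paper silently assumes this, and your step closes a small gap. Second, and more substantively, the paper's proof claims that $f(R)=-1/R+1/(R^2(\log R)^2)$ is strictly decreasing on $(1,\infty)$, which is in fact false: $f'(R)=R^{-3}(\log R)^{-3}\bigl[R(\log R)^3-2\log R-2\bigr]$ changes sign (e.g. $f(e)\approx-0.233<f(e^2)\approx-0.131$, so $f$ is increasing there). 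Your factorization $\dot R=(1-R(\log R)^2)/(R^2(\log R)^2)$ shifts the monotonicity analysis to $R\mapsto R(\log R)^2$, whose derivative $\log R\,(\log R+2)$ is genuinely positive on $(1,\infty)$. This gives a clean, correct proof that $\dot R$ vanishes exactly once and changes sign there, which is all the argument actually needs (the paper's "strict decrease of $f$" is not used for anything beyond uniqueness of the root and the sign dichotomy, both of which your version supplies correctly). In short: same approach, but your version repairs a minor error in the paper.
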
 

\begin{proof}
For $R>1$, the solution to $$\begin{cases}\Delta u=0 &\text{ in $B_R\backslash \overline{B_1}$,}\\u=1 &\text{ in $B_1$,}\\u=0 &\text{ outside $B_R$}\end{cases}$$ is explicitly given by 

$$u_R(x)=-\frac{1}{\log R}(\log |x|-\log R) \text{ in $B_R\backslash B_1$},$$ and $$u_R(x)=0 \text{ outside $B_R$}.$$

In particular the normal derivative along $\partial B_R$ is $$(u_R)_{\nu}(R\frac{x}{|x|})=-\frac{1}{R\log R}.$$ Meanwhile the curvature of a ball of radius $R$ is $1/R$.

Thus the flow reduces to the following ODE on the radii: $$\begin{cases}\frac{d}{dt}R(t)=-\frac{1}{R}+\frac{1}{R^2(\log R)^2} &\text{ for $t>0$,}\\ R(0)=R_0 &\text{ at $t=0$}.\end{cases}$$

Note that the function $f(R)=-\frac{1}{R}+\frac{1}{R^2(\log R)^2}$ is strictly decreasing on $R>1$ and  has the unique root at $R_{opt}$, we see $B_{R_{opt}}$ is the unique stationary solution to the flow.  

For $R_0>R_{opt}$, the monotone property of $f$ implies $dR/dt<0$ thus $R$ is decreasing and remains larger than $R_{opt}$. This and the fact that the only singularity of $f(R)$ is at $R=1$ gives the global existence of the solution to the flow for the case when $R_0>R_{opt}$. The strict decreasing property of $f$ implies also the convergence result as $t\to\infty$. 

The case for $R_0<R_{opt}$ follows from symmetric argument. 
\end{proof} 

\section{Preservation of the `convexity' condition}
In this section we prove that condition (1.1) is preserved by the flow.  Since no explicit evolution equation can be written for the curvature or $u_\nu^2$, we cannot apply simple maximum principle type argument on curvature as it is often done for classical geometric flows.  Instead we combine a time-shift technique with the geometric comparison as stated in Theorem 2.1 to get the following:

\begin{thm}
Let $\{\Omega_t\}_{0\le t<T}$ be a smooth flow satisfying condition (1.1), then for all $t<T$ $$H-u_\nu^2> 0 \text{ on $\partial\Omega_t$}.$$
\end{thm}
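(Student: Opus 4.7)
The plan is to apply Theorem~2.1 to $\{\Omega_t\}$ and a time-shifted copy of itself, and then use a scaling of the initial configuration to convert the resulting monotonicity into the strict inequality $H>u_\nu^2$.

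First, observe that the equation of motion is autonomous in $t$, so for every $s\in(0,T)$ the shifted family $\{\Omega_{t+s}\}_{0\le t<T-s}$ is itself a smooth flow, with initial configuration $\Omega_s$. Hypothesis~(1.1), together with the compactness and $C^{2,\alpha}$-regularity of $\partial\Omega_0$, yields a $\delta>0$ such that the inward normal velocity $V(\cdot,0)=H-u_\nu^2\ge\delta$ on $\partial\Omega_0$. Hence for small $s>0$, one has $\Omega_s\Subset\Omega_0$ with a uniform inward gap of order $\delta s$, and Theorem~2.1 forces
$$\Omega_{t+s}\subseteq\Omega_t \quad\text{for all } t\in[0,T-s).$$
Dividing the signed gap by $s$ and letting $s\to 0^+$ then gives the non-strict inequality $H\ge u_\nu^2$ on every $\partial\Omega_t$.

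To upgrade to the strict inequality, the plan is to run the same argument for a slightly enlarged initial datum. For each sufficiently small $\epsilon>0$, I would construct a $C^{2,\alpha}$ outward perturbation $\Omega_0^\epsilon\Supset\Omega_0$ with a uniform $\epsilon$-collar between the two boundaries, which still satisfies~(1.1) with a uniform positive lower bound on its velocity. Such perturbations exist because both $H$ and $u_\nu^2$ depend continuously on the domain in the $C^{2,\alpha}$-topology, so~(1.1) is an open condition. Theorem~2.1 then gives the nesting $\Omega_t\subset\Omega_t^\epsilon$ on $[0,T)$ with strictly positive gap, and combining the time-shift monotonicity applied to both flows with this nesting produces a quantitative positive lower bound on the inward velocity of $\partial\Omega_t$, i.e., $H-u_\nu^2>0$.

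The main obstacle is precisely the nonlocality of $u_\nu^2$: because there is no pointwise PDE satisfied by $V=H-u_\nu^2$, the usual strong maximum principle that would promote $V\ge 0$ to $V>0$ is not directly available. The scaling/perturbation in the last paragraph replaces this missing local tool with a global comparison between two nested flows, relying on the openness of~(1.1) and the continuous dependence of the capacity potential on the domain. The delicate step will be making the propagation of the initial $\epsilon$-gap quantitative enough to exclude $V$ vanishing at any later time; the uniform parabolicity bound $0<c\le a\le C$ from Proposition~2.2 should be what drives this quantitative control.
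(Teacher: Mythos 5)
Your core mechanism — shift the flow in time by $s$, note that $\Omega_s\Subset\Omega_0$ because the initial velocity is bounded below by a positive $\delta$, and invoke Theorem~2.1 on the two flows $\{\Omega_t\}$ and $\{\Omega_{t+s}\}$ to get $\Omega_{t+s}\subset\Omega_t$ for all $t$ — is exactly the paper's argument. You are also right that dividing by $s$ and sending $s\to 0^+$ can only produce the non-strict inequality $H\ge u_\nu^2$; this is the genuinely delicate point.

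Where your proposal goes astray is the upgrade to strict inequality. Enlarging the initial datum to $\Omega_0^\epsilon\Supset\Omega_0$ and propagating the nesting $\Omega_t\subset\Omega_t^\epsilon$ gives you an \emph{outer} barrier for $\Omega_t$, but that conveys no information about the inward speed of $\partial\Omega_t$: the velocity $H-u_\nu^2$ on $\partial\Omega_t$ is an intrinsic quantity of $\Omega_t$, and the mere existence of a larger nested flow at a positive distance places no lower bound on it. (Indeed, if anything, an outer barrier would tend to bound the speed from \emph{above}, since a fast-shrinking inner domain would be consistent with, not contradicted by, a slow outer one.) The proposal's final sentence — "combining the time-shift monotonicity applied to both flows with this nesting produces a quantitative positive lower bound" — is precisely the missing step, and I do not see how to fill it by this route. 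The paper closes the gap differently: it does not stop at the inclusion $\Omega_{t+\tau}\subset\Omega_t$, but passes through the elliptic comparison of the \emph{potentials}, obtaining $u(\cdot,t+\tau)\le u(\cdot,t)$ for a fixed small $\tau>0$. This monotonicity of $u$ in time, together with the fact that $\Omega_\tau$ is \emph{compactly} contained in $\Omega_0$ (so the positive phase genuinely shrinks), is what the authors lean on to conclude $H-u_\nu^2>0$. That step is itself stated tersely in the paper, but the potential comparison $u(\cdot,t+\tau)\le u(\cdot,t)$ — which your proposal never invokes — is the ingredient that makes a maximum-principle / Hopf-type strictness argument available, replacing the missing pointwise PDE for $V$.
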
 

\begin{proof}
Since the initially the domain is strictly shrinking, by continuity and compactness, one finds $\tau>0$ such that $$H-u_\nu^2> 0 \text{ on $\partial\Omega_t$}$$ for all $t<\tau$. 

Define $w(x,t):= u(x,t+\tau)$ and $O_t=\{w(\cdot, t)>0\}$, then $w(\cdot,t)$ is the capacity potential of $O_t$ and the normal velocity of the boundary of $O_t$ is 
\begin{align*}
V_{O_t}&=V_{\Omega_{t-\tau}}(x)\\&=H_{\partial\Omega_{t-\tau}}-(u_{\Omega_{t-\tau}})_\nu^2\\&=H_{\partial O_t}-w_\nu^2.
\end{align*}Consequently $\{O_t\}$ is a flow with initial domain $O_0=\Omega_\tau$, which is contained in $\Omega_0$ by our `convexity' assumption. 

Therefore Theorem 2.1 ensures $O_t\subset\Omega_t$ for all $t$, and from here elliptic comparison gives $w(\cdot, t)\le u(\cdot, t)$. That is, 
$$u(\cdot, t+\tau)-u(\cdot, t)\le 0 \text{ for all small $\tau$}.$$ But this implies $u$ is decreasing and thus its positive phase $\Omega_t$ shrinking for all $t$, which is only possible if $$H-u_\nu^2> 0.$$
\end{proof} 

As a simple corollary, we have 
\begin{cor}
The domain $\Omega_t$ remains strictly convex. 
\end{cor}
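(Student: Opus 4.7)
The plan is to read off the corollary directly from Theorem 3.1 together with one classical planar fact. Since $u_\nu^2 \geq 0$ as the square of a real quantity, the strict inequality $H - u_\nu^2 > 0$ on $\partial \Omega_t$ just established in Theorem 3.1 immediately gives $H > 0$ pointwise on $\partial \Omega_t$ for every $t < T$. In other words, the signed curvature of the boundary, taken with respect to the inward normal (the sign convention forced by writing $V = H - u_\nu^2$ as an inward normal velocity), is strictly positive at every point.

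It remains to translate pointwise positivity of the curvature into strict convexity of $\Omega_t$ as a planar set. Because $\Omega_0$ is a bounded $C^{2,\alpha}$ domain containing $B_1$ as a compact subset, and the flow is smooth up to time $T$, the outer boundary $\partial \Omega_t$ remains a single smooth simple closed curve and $\Omega_t$ stays topologically a disk. For such a Jordan curve in the plane, the standard characterisation applies: strict positivity of the signed curvature at every point is equivalent to strict convexity of the enclosed region.

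I anticipate no genuine obstacle here, since the substantive work has been done in the preceding theorem and the corollary amounts to a one-line deduction followed by a classical planar fact. If one wanted a fully self-contained argument for the final step, it suffices to parametrise $\partial\Omega_t$ by arclength and observe that the tangent angle has strictly positive derivative $H > 0$, so it is strictly monotone along the curve; this rules out any supporting line being touched from the non-convex side and yields strict convexity of $\Omega_t$.
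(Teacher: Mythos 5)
Your argument is correct and is exactly the deduction the paper leaves implicit (the paper supplies no proof, just the phrase ``as a simple corollary''): since $u_\nu^2\ge 0$, Theorem 3.1 forces $H>0$ on $\partial\Omega_t$, and positive signed curvature on a smooth simple closed planar curve (which $\partial\Omega_t$ remains, given the $\mathbb{S}^1$-parametrisation in Proposition 2.2) is equivalent to strict convexity of the enclosed region. Nothing to add; this is the intended one-line reading of Theorem 3.1.
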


\section{Positive distance to $B_{R_{opt}}$ and consequences}
In this section we prove that for any finite time, our domain remains at a positive distance to $B_{R_{opt}}$, where $R_{opt}$ is as in Proposition 2.2. This has strong regularity implications for our planar flow. 

\begin{prop}
Let $\{\Omega_t\}_{0\le t<T}$ be a smooth flow with $\Omega_0$ satisfying condition (1.1). Then $B_{R_{opt}}$ is compactly contained in $\Omega_t$ for all $t$ unless $\Omega_0=B_{R_{opt}}$.
\end{prop}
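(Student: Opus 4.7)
The plan is to prove the containment at $t=0$ directly from condition (1.1) via a Hopf-type comparison, and then to propagate it to each later time by bounding $\Omega_t$ from below by a ball flow whose radius stays strictly larger than $R_{opt}$. The hard part will be the $t=0$ statement, which must convert the pointwise boundary inequality (1.1) into the global geometric conclusion $\overline{B_{R_{opt}}}\subset\Omega_0$.

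For the first step I would let $r\ge 1$ be the largest radius with $B_r\subset\Omega_0$ (ball centered at the origin). Since $\Omega_0\supset\supset B_1$ is open and bounded, this supremum is attained, $r>1$, and there is a touching point $y_0\in\partial B_r\cap\partial\Omega_0$. Interior-ball tangency immediately gives $H(y_0)\le 1/r$. To lower-bound $u_\nu^2(y_0)$, I would use the explicit ball potential $u_{B_r}$ from Proposition 2.2 as a barrier: the inclusion $B_1\subset B_r\subset\Omega_0$ makes $v:=u_{\Omega_0}-u_{B_r}$ harmonic in $B_r\setminus\overline{B_1}$, zero on $\partial B_1$, nonnegative on $\partial B_r$, and zero at $y_0$. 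The Hopf boundary lemma then yields $|(u_{\Omega_0})_\nu(y_0)|>1/(r\log r)$ unless $v\equiv 0$; in that degenerate case analytic continuation forces $\Omega_0=B_r$, and the explicit ball formulas give $|(u_{\Omega_0})_\nu(y_0)|=1/(r\log r)$ with $H(y_0)=1/r$. Feeding either scenario into condition (1.1) produces $1/r>1/(r\log r)^2$, i.e.\ $(\log r)^2>1/r$; the monotonicity of the function $f$ from the proof of Proposition 2.2 then forces $r>R_{opt}$, so $\overline{B_{R_{opt}}}\subset B_r\subset\Omega_0$.

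With the compact initial containment in hand, I would set $\varepsilon:=r-R_{opt}>0$ and invoke Proposition 2.2 for the ball flow from $B_{R_{opt}+\varepsilon}$: it stays a centered family $\{B_{R(t)}\}_{t\ge 0}$ with $R(t)>R_{opt}$ for all $t$. The comparison principle (Theorem 2.1) then gives $B_{R(t)}\subset\Omega_t$ for every $t<T$, and the strict inequality $R(t)>R_{opt}$ upgrades this to $\overline{B_{R_{opt}}}\subset B_{R(t)}\subset\Omega_t$, which is the desired compact containment.

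The main obstacle is the Hopf step above, because it is the only available bridge between the pointwise inequality (1.1) and a sharp quantitative lower bound on the capacity-potential normal derivative at the touching point $y_0$; without it, nothing rules out $r\le R_{opt}$ and the second step has nothing to start from. Once the Hopf bound is in hand, the rest is routine comparison with the explicit ball ODE of Proposition 2.2.
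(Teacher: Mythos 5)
Your proof is correct, and the core touching-ball argument for a single time slice is essentially the one in the paper: both pick the largest origin-centered ball inscribed in the domain (your $B_r$, the paper's $\lambda_* B_{R_{opt}}$, with $r=\lambda_*R_{opt}$), bound $H$ at the contact point from above by the ball curvature via interior tangency, bound $|u_\nu|$ from below via elliptic comparison between the two capacity potentials, feed these into (1.1), and invoke the monotonicity of the function $f$ from Proposition~2.2 to conclude $r>R_{opt}$. Where you diverge is in how the containment is propagated in time. The paper invokes Theorem~3.1 (preservation of condition (1.1)) so that the touching-ball argument applies verbatim at every $t$; you instead run the explicit radial flow from $B_{R_{opt}+\varepsilon}$ and use the comparison principle (Theorem~2.1) together with $R(t)>R_{opt}$ from Proposition~2.2. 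Your route avoids Theorem~3.1 entirely, but it essentially absorbs the content of the paper's Proposition~4.2 (the quantitative lower bound on $\mathrm{dist}(\partial\Omega_t,B_{R_{opt}})$ via the radial barrier) into this proposition; the paper's separation keeps the two roles distinct and lets the qualitative statement rest only on the preserved pointwise inequality.

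Two small comments on emphasis. First, you label the Hopf step as the crux, but it is not actually needed for the strict inequality: the weak comparison $u_{\Omega_0}\ge u_{B_r}$ already gives $|(u_{\Omega_0})_\nu(y_0)|\ge 1/(r\log r)$, and the strictness you need in $1/r>1/(r\log r)^2$ then comes directly from the strict inequality in condition (1.1), exactly as in the paper's proof. Second, your handling of the degenerate Hopf case ($v\equiv 0\Rightarrow\Omega_0=B_r$) is more careful than the paper, which passes over this silently; it is a nice bit of rigor even though, as noted, the case is not needed to close the argument.
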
 

\begin{proof}

Since condition (1.1) is preserved by the flow, it suffices to prove that any domain $\Omega$ with $H-u_{\nu}^2> 0$ along $\partial\Omega$ must contain $B_{R_{opt}}$ as a compact subset.  

For small $\lambda$, $$\lambda B_{R_{opt}} \subset\subset \Omega$$ by our assumption that $\Omega\supset B_1.$ Define $$\lambda_*:=\sup\{\lambda|\lambda B_{R_{opt}}\subset \Omega\},$$our goal is to show $\lambda_*>1$.

By definition of $\lambda_*$ we have \begin{equation}\lambda_* B_{R_{opt}}\subset\Omega,\end{equation} and $$\partial(\lambda_* B_{R_{opt}})\cap \partial\Omega\neq\emptyset.$$ Pick $x_0\in \partial(\lambda_* B_{R_{opt}})\cap \partial\Omega,$ then (4.1) implies $$H_{\partial\Omega}(x_0)\le H_{\partial(\lambda_* B_{R_{opt}})}(x_0).$$ Also elliptic comparison principle gives $$u_{\Omega}\ge u_{\lambda_*B_{R_{opt}}}.$$ This together with the fact that both potentials vanish at $x_0$ gives $$(u_\Omega)_\nu^2(x_0)>(u_{\lambda_*B_{R_{opt}}})_\nu(x_0)^2.$$
In conclusion, \begin{equation*}H_{\partial(\lambda_* B_{R_{opt}})}(x_0)-(u_{\lambda_*B_{R_{opt}}})_\nu(x_0)^2> H_{\partial\Omega}(x_0)-(u_\Omega)_\nu^2(x_0)> 0.
\end{equation*}

Now we plug in the explicit expression as obtained in Section 2 for the left-hand side to get $$\frac{1}{\lambda_*R_{opt}}-(\frac{1}{\lambda_*R_{opt}\log (\lambda_*R_{opt})})^2> 0=\frac{1}{R_{opt}}-(\frac{1}{R_{opt}\log (R_{opt})})^2.$$ From here the monotonicity of the function $f$ as defined in the proof of Proposition 2.2 gives $$\lambda_*> 1.$$
\end{proof} 

By comparing with radially symmetric flows we give a lower bound on the distance between $\partial\Omega_t$ and $B_{R_{opt}}$ depending only on initial data:

\begin{prop}
Let $\{\Omega_{t}\}_{0\le t<T}$ be a smooth flow with $\Omega_0$ satisfying condition (1.1). There exists a positive constant $\delta$ depending only on $T$ and $dist(\partial\Omega_0,B_{R_{opt}})$ such that $$dist(\partial \Omega_t, B_{R_{opt}})\ge\delta$$ for all $t< T$. 
\end{prop}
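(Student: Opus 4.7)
The plan is to trap $\Omega_t$ from below by an inner radially symmetric flow and to exploit the fact that, by Proposition 2.3, such a radial flow approaches $B_{R_{opt}}$ only at infinite time. Set $d_0:=\mathrm{dist}(\partial\Omega_0,B_{R_{opt}})$. The case $\Omega_0=B_{R_{opt}}$ is stationary, so by Proposition 4.1 I may assume $d_0>0$.

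The first step is a purely geometric observation: the concentric ball $B_{R_{opt}+d_0}$ is itself contained in $\Omega_0$. Indeed, given any $x$ with $|x|<R_{opt}+d_0$, either $x\in B_{R_{opt}}\subset\Omega_0$, or else the segment from $0$ to $x$ must exit $\Omega_0$ at some first point $z\in\partial\Omega_0$ satisfying $|z|\le|x|<R_{opt}+d_0$, which gives $\mathrm{dist}(z,B_{R_{opt}})=|z|-R_{opt}<d_0$, contradicting the definition of $d_0$.

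The second step feeds this inclusion into the comparison machinery. Because $R_{opt}+d_0>R_{opt}$, Proposition 2.3 produces a global radial flow $\{B_{R(t)}\}_{t\ge 0}$ starting from $B_{R_{opt}+d_0}$ with $R(t)$ strictly decreasing, $R(t)>R_{opt}$ for all finite $t$, and $R(t)\to R_{opt}$ only as $t\to\infty$. Applying Theorem 2.1 to the initial inclusion $B_{R_{opt}+d_0}\subseteq\Omega_0$ then yields $B_{R(t)}\subseteq\Omega_t$ for every $t<T$. Since $B_{R_{opt}}\subset B_{R(t)}$, any point of $\partial\Omega_t$ lies outside $B_{R(t)}$ and therefore at Euclidean distance at least $R(t)-R_{opt}$ from $B_{R_{opt}}$. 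Setting
$$\delta:=R(T)-R_{opt}>0,$$
which depends only on $T$ and $d_0$, completes the argument.

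The main obstacle here is really mild: no new estimate is needed, and the whole proof reduces to a chain Proposition 4.1 $\to$ inner concentric ball $\to$ scalar ODE of Proposition 2.3 $\to$ comparison principle. The only place requiring a little care is the first geometric step, which upgrades the distance $d_0$ between $\partial\Omega_0$ and the fixed target ball into a genuine concentric inner ball of radius $R_{opt}+d_0$; the segment argument above handles this without invoking convexity, though Corollary 3.2 would make it automatic.
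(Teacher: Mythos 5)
Your proof is correct and follows essentially the same route as the paper: contain a concentric ball $B_{R_{opt}+\epsilon}$ inside $\Omega_0$, evolve it as a radial flow via Proposition 2.3, invoke the comparison principle of Theorem 2.1 to keep $B_{R(t)}\subset\Omega_t$, and take $\delta=R(T)-R_{opt}$. The one small addition is your segment argument showing one may take $\epsilon=d_0$ exactly (the paper merely asserts the existence of such an $\epsilon$), which is a harmless and correct refinement rather than a different approach.
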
 
\begin{proof}
The previous proposition ensures the positivity of $dist(\partial\Omega_0,B_{R_{opt}})$, and in particular one finds $\epsilon=\epsilon (dist(\partial\Omega_0,B_{R_{opt}}))$ such that $B_{R_{opt}+\epsilon}\subset\Omega_0$. 

Start a radially symmetric flow as in Proposition 2.2 with $$R_0=R_{opt}+\epsilon.$$ Theorem 2.1 guarantees that for all $t$ $$B_{R(t)}\subset\Omega_t.$$ And in particular $$dist(\partial\Omega_t,B_{R_{opt}})\ge R(t)-R_{opt}\ge R(T)-R_{opt}.$$ The right-hand side is a positive constant depending only on $T$ and $dist(\partial\Omega_0, B_{R_{opt}}).$
\end{proof} 

One consequence of the positive distance is uniform Lipschitz bound on the potential:
\begin{prop}
Let $\{\Omega_{t}\}_{0\le t<T}$ be a smooth flow with $\Omega_0$ satisfying condition (1.1). Then $$|u_{\nu}|\le \frac{1}{R_{opt}-1}.$$
\end{prop}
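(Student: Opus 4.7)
The plan is to construct an explicit affine supersolution that dominates the capacity potential from above on $\Omega_t \setminus \overline{B_1}$ and matches it at a chosen boundary point; its gradient, which is the constant $1/(R_{opt}-1)$, will then dominate $|u_\nu|$ at that point via a standard Hopf-style comparison.

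Fix $x_0 \in \partial\Omega_t$ and let $\nu$ be the outward unit normal to $\partial\Omega_t$ at $x_0$. By Corollary 3.2 the domain $\Omega_t$ is convex, hence it lies in the half-plane $\{(x - x_0) \cdot \nu \le 0\}$. By Proposition 4.1, $B_{R_{opt}} \subset\subset \Omega_t$; in particular the point $R_{opt}\,\nu$ lies in $\overline{B_{R_{opt}}} \subset \overline{\Omega_t}$, so evaluating the half-plane inequality at this point gives
\[
x_0 \cdot \nu \,\ge\, R_{opt}.
\]
Define the affine function
\[
\phi(x) \,=\, -\,\frac{(x - x_0) \cdot \nu}{R_{opt} - 1}.
\]

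Now I verify $\phi \ge u$ on the boundary of the harmonicity region $\Omega_t \setminus \overline{B_1}$: on $\partial\Omega_t$ we have $u = 0$ and $\phi \ge 0$ by the half-plane containment; on $\partial B_1$ any $y$ satisfies $y \cdot \nu \le |y| = 1$, so combined with $x_0 \cdot \nu \ge R_{opt}$ we obtain
\[
\phi(y) \,=\, \frac{x_0 \cdot \nu - y \cdot \nu}{R_{opt} - 1} \,\ge\, \frac{R_{opt} - 1}{R_{opt} - 1} \,=\, 1 \,=\, u(y).
\]
Since both $\phi$ and $u$ are harmonic in $\Omega_t \setminus \overline{B_1}$, the maximum principle yields $\phi \ge u$ throughout the annular region. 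As $\phi(x_0) = 0 = u(x_0)$, the non-negative harmonic function $\phi - u$ attains its minimum at $x_0$, so comparing inward normal derivatives there gives
\[
|u_\nu(x_0)| \,\le\, |\phi_\nu(x_0)| \,=\, \frac{1}{R_{opt} - 1}.
\]
Since $x_0$ was arbitrary, the bound holds on all of $\partial\Omega_t$.

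The only non-mechanical step is the containment inequality $x_0 \cdot \nu \ge R_{opt}$, which is where both previously established facts — convexity of $\Omega_t$ (Corollary 3.2) and the strict containment $B_{R_{opt}} \subset\subset \Omega_t$ (Proposition 4.1) — enter the argument. Everything else is comparison with a linear function, and I do not anticipate any obstacle. One minor remark: the same argument applied with the normal derivative taken on $\partial B_1$ would require a different barrier, but the statement as written concerns only the bound relevant to the flow equation, which lives on $\partial\Omega_t$.
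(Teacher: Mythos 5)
Your proof is correct, and it takes a genuinely different route from the paper. The paper uses the distance function $d(x)=\mathrm{dist}(x,\partial\Omega_t)$ as a global superharmonic barrier: convexity gives $\Delta d\le 0$ (via \cite{CC}), the containment $B_{R_{opt}}\subset\subset\Omega_t$ gives $d\ge R_{opt}-1$ on $\partial B_1$, and the comparison $d\ge(R_{opt}-1)u$ combined with $|\nabla d|=1$ on $\partial\Omega_t$ yields the bound in one stroke. You instead build, for each fixed $x_0\in\partial\Omega_t$, an affine (hence exactly harmonic) barrier $\phi$ supported on the tangent half-plane; the same two ingredients — convexity (giving the supporting half-plane) and $B_{R_{opt}}\subset\subset\Omega_t$ (giving $x_0\cdot\nu\ge R_{opt}$) — enter, but the superharmonicity of the distance function is never needed, and the comparison is against the explicit linear function rather than $d$. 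What your argument buys is a more elementary verification (no appeal to the nontrivial concavity-type property of $d$ on convex domains), at the mild cost of being pointwise rather than a single global inequality. Both yield exactly the same constant $1/(R_{opt}-1)$, which is unsurprising since the distance function is, near $\partial\Omega_t$, the infimum of such affine functions.
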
 

\begin{proof}
Define $$d(x)=dist(x,\partial\Omega_t).$$ Convexity of the domains gives \cite{CC} $$\Delta d(x)\le 0 \text{ in $\Omega_t$}.$$ Meanwhile, $d=0=u \text{ on $\partial\Omega_t$}$ and $d\ge R_{opt}-1=(R_{opt}-1)u \text{  on $\partial B_1$}.$ Thus comparison principle gives $d\ge (R_{opt}-1)u $ inside $\Omega_t\backslash B_1$, and consequently $$(R_{opt}-1)|u_\nu|\le|\nabla d|=1.$$ 
\end{proof} 

Using the special geometry of the plane, the positive distance translates to Lipschitz regularity of the free boundary:

\begin{prop}
Let $\{\Omega_t\}_{0\le t<T}$ be a smooth flow with $\Omega_0$ satisfying condition (1.1). Then there is a finite constant $L$ depending only on the diameter of $\Omega_0$ and  $\delta$ as in Proposition 4.2 such that $\partial\Omega_t$  has Lipschitz norm less than $L$. 
\end{prop}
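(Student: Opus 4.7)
The plan is to reduce the statement to a convex-geometry fact using the two-sided inclusion already available. By Corollary 3.2 the domain $\Omega_t$ is strictly convex for every $t$. Moreover, since condition (1.1) is preserved by Theorem 3.1, the domain is always shrinking, so $\Omega_t\subset\Omega_0$ and in particular $\operatorname{diam}(\Omega_t)\le D:=\operatorname{diam}(\Omega_0)$. Combining with Proposition 4.2 gives the two-sided inclusion $B_{R_{opt}+\delta}\subset\Omega_t\subset B_D$ (recall $0\in B_1\subset B_{R_{opt}+\delta}$).

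The first step is to control the angle between the outer unit normal at a boundary point $x\in\partial\Omega_t$ and the radial direction $x/|x|$. The tangent line to $\partial\Omega_t$ at $x$ supports $\Omega_t$ and therefore also supports the ball $B_{R_{opt}+\delta}$, so its distance from the origin is at least $R_{opt}+\delta$. Denoting by $\beta(x)$ the angle in question, this distance also equals $|x|\cos\beta(x)$. Thus
\[
\cos\beta(x)\ge\frac{R_{opt}+\delta}{|x|}\ge\frac{R_{opt}+\delta}{D},
\]
which yields $|\tan\beta(x)|\le L := \sqrt{D^2-(R_{opt}+\delta)^2}/(R_{opt}+\delta)$, a constant depending only on $\operatorname{diam}(\Omega_0)$ and $\delta$.

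The second step is to convert this angular bound into the asserted Lipschitz estimate. Writing $\partial\Omega_t$ in polar form as $\{\rho(\theta)(\cos\theta,\sin\theta):\theta\in\mathbb{S}^1\}$, a direct computation gives $|\rho'(\theta)/\rho(\theta)|=|\tan\beta(x)|$, so $|\rho'|\le L\rho\le LD$, and the whole curve is globally Lipschitz over $\mathbb{S}^1$ with constant controlled by $L$ and $D$. Equivalently, each boundary point has a neighborhood representable as a graph over its tangent line with slope at most $L$, and these local graphs fit together by convexity to give a uniform Lipschitz norm for $\partial\Omega_t$.

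The main obstacle I anticipate is only a bookkeeping one, namely ensuring that the pointwise angular bound translates into a uniform \emph{global} Lipschitz constant rather than a purely local one. The special planar geometry mentioned in the statement plays its role precisely here: in $\mathbb{R}^2$ a convex body containing the origin admits a single globally defined polar parametrization, so the pointwise bound on $\rho'/\rho$ immediately yields a global Lipschitz constant. In higher dimensions the same program would require additional care with local charts.
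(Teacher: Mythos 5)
Your proposal is correct, and it is a genuinely different and cleaner route than the paper's. Both proofs rest on the same two inputs --- convexity of $\Omega_t$ and the inner inclusion $B_{R_{opt}+\delta}\subset\Omega_t$ from Proposition 4.2, together with the outer bound $\Omega_t\subset\Omega_0$ coming from monotone shrinking. The paper then argues geometrically: for each boundary point $x$ it draws the two tangent segments from $x$ to the inner ball, observes that nearby rays from the origin must cross one of these segments inside $\Omega_t$, and reads off a local Lipschitz estimate for $\rho$; points that are not angularly close are handled separately by the diameter bound. You instead bound a single angle: the supporting line at $x$ cannot cross $B_{R_{opt}+\delta}$, so $|x|\cos\beta(x)=\mathrm{dist}(0,\ell)\ge R_{opt}+\delta$, giving a uniform bound on $|\tan\beta|$, and then the polar identity $|\rho'(\theta)|/\rho(\theta)=|\tan\beta|$ converts this directly into $|\rho'|\le LD$ on all of $\mathbb{S}^1$. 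Your argument avoids the case split entirely and does not need the rather delicate ``rays intersect the chord $\overline{xp}$'' bookkeeping, which in the paper is a bit under-specified. What the paper's version buys, arguably, is that it never invokes the differential identity $|\rho'|/\rho=|\tan\beta|$ and works purely with secants, so it is more obviously robust at low regularity; but since the flow is smooth on $[0,T)$ this is not a real advantage here. One small clarification worth making explicit in your write-up: Proposition 4.2 gives $\mathrm{dist}(\partial\Omega_t,B_{R_{opt}})\ge\delta$, and the inclusion $B_{R_{opt}+\delta}\subset\Omega_t$ follows because $B_{R_{opt}}\subset\Omega_t$ and any point of $B_{R_{opt}+\delta}\setminus\Omega_t$ would force a boundary point of $\Omega_t$ within distance $\delta$ of $B_{R_{opt}}$.
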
 

\begin{proof}
Fix a time $t$, convexity of $\Omega_t$ implies that $\partial\Omega_t$ intersects with $\{r\theta|r>0\}$ exactly once for each $\theta\in\mathbb{S}^1$. Thus $\partial\Omega_t$ can be parametrised by the following function on the unit circle $$\rho(x/|x|)=|x|,$$ where $x\in\partial\Omega_t$.

From each $x\in\partial\Omega_t$, there are exactly two tangent lines to $B_{R_{opt}}$, $\overline{xp}$ and $\overline{xq}$. Here $\overline{AB}$ denotes the line segment between the points $A$ and $B$, and $p$ and $q$ are the two contact points between the tangent lines and $B_{R_{opt}}$.

Since $|x|\ge \delta+R_{opt}$, the angle between $\overline{op}$ and $\overline{oq}$ is bounded from below by $\theta_0=2\cos^{-1} (\frac{R_{opt}}{R_{opt}+\delta})$.  Here $o$ is the origin of the plane. 

Now note that convexity implies the segments $\overline{xp}$ and $\overline{xq}$ are contained in $\Omega_t$, thus for $y\in\partial\Omega_t$ with $|y/|y|-x/|x||\le\frac{1}{2}\theta_0$, $\overline{oy}$ intersects either $\overline{xp}$ or $\overline{xq}$ at some $z$. Hence \begin{align*}\rho(x/|x|)-\rho(y/|y|)&\ge |x|-|z|\\&\ge c(x/|x|-z/|z|)\\&= c(x/|x|-z/|z|).\end{align*} Here $c$ is the Lipschitz norm of the function that maps each point $\theta$ on $\mathbb{S}^1$ to the distance from $o$ to the intersection of the ray along $\theta$ with $\overline{xp}$ or $\overline{xq}$. Thus $c$ is bounded.

For $y\partial\Omega_t$ with $|y/|y|-x/|x||\ge\frac{1}{2}\theta_0$,  one can simply use the diameter bound and note that since our domains are shrinking, all diameters are bounded by the diameter of the initial domain. This gives \begin{align*}\rho(x/|x|)-\rho(y/|y|)&\le diam(\Omega_0)\\&=\frac{diam(\Omega_0)}{\theta_0}\frac{1}{\theta_0}\\&\le\frac{2diam(\Omega_0)}{\theta_0}|y/|y|-x/|x||. \end{align*} This gives the Lipschitz estimate for $\rho$ and hence on $\partial\Omega_t$.
\end{proof}

\section{Long term existence and asymptotics}
In this section we give the proof of the main result, namely, Theorem 1.1. Note that once we have the long term existence of the smooth flow, we can use the radially symmetric flow starting from $B_R$ for sufficiently large $R$ as a barrier flow, then Proposition 2.2 gives the convergence result.  Thus it suffices to prove the long term existence of the smooth flow. 

\begin{proof} (of the long term existence.)
Suppose $\{\Omega_t\}_{0\le t<T}$ is a maximal smooth flow with $\Omega_0$ satisfying condition (1.1). 

Suppose $T$ is finite, we claim that \begin{equation}\limsup_{t\to T}[\rho(\cdot,t)]_{Lip}=\infty,\end{equation}where $\rho(\cdot, t)$ is the function defined in the proof of Proposition 4.4.  

Suppose, on the contrary, that $[\rho(\cdot,t)]_{Lip}$ remains bounded as $t\to T$. Then Proposition 2.2 together with Proposition 4.3 show that $\rho$ solves a parabolic equation of bounded coefficients. Consequently $\partial_\theta\rho$ solves the divergence-type equation with bounded coefficients: $$\partial_t(\partial_\theta\rho)=\partial_\theta(a(\theta, \rho,\partial_\theta\rho)\partial_{\theta\theta}\rho+b(\theta,\rho,\partial_{\theta}\rho)+c(\theta,\rho,\partial_\theta\rho)u_\nu^2).$$ 

De Giorgi-Nash-Moser theory then gives uniform $C^{\alpha}$-estimate on $\partial_\theta\rho$. That is, $\rho$ is $C^{1,\alpha}$. As a result, $a,b$ and $c$ are all H\"older continuous. Moreover, $u$ is a harmonic function with smooth data inside a $C^{1,\alpha}$ domain, so it has H\"older continuous derivatives up to the boundary. Therefore, $\rho$ solves a parabolic equation with H\"older coefficients and hence it is smooth by Schauder theory, contradicting the finite terminal time $T$. This proves (5.1).

Therefore if we have a finite terminal time $T$, the Lipschitz norm of $\rho$ must blow up. This, however, has been ruled out by Proposition 4.4. Thus $T=\infty$ and no singularity can develop for our flow. \end{proof}

\section*{Acknowledgements}Hui Yu would like to thank many colleagues and friends, in particular Hongjie Dong, Dennis Kriventsov and Tianling Jin, for fruitful discussions concerning this project, especially for the discussion about parabolic equations in one spatial dimension.  He is also grateful to Yanyan Li and Jingang Xiong for their invitation to Beijing Normal University, where part of this work was conducted.



\end{document}